\documentclass{amsart}


\title[On The Entropy of Continuous Flows With Uniformly Expansive Points and The Globalness of Shadowable Points With Gaps]
      {On the Entropy of Continuous Flows With Uniformly Expansive Points and The Globalness of Shadowable Points With Gaps }


\author[A. Arbieto and E. Rego]{A. Arbieto and E. Rego}
\address{Instituto de Matem\'atica, Universidade Federal do Rio de Janeiro, P. O. Box 68530, 21945-970 Rio de Janeiro, Brazil.}
\email{arbieto@im.ufrj.br}
\email{elias@im.ufrj.com}
\thanks{MSC code: 37B40. Keywords: topological entropy, expansivity,  shadowing and continuous flows. A. A. was partially supported by CNPq, FAPERJ and PRONEX/DS from Brazil. E. R. was partially supported by CAPES, CNPq and Faperj from Brazil.}


\newtheorem{theorem}{Theorem}
\newtheorem{corollary}[theorem]{Corollary}
\newtheorem*{mainA}{Theorem A}

\newtheorem*{mainB}{Theorem B}
\newtheorem*{mainC}{Theorem C}

\newtheorem{proposition}[theorem]{Proposition}

\theoremstyle{definition}
\newtheorem{definition}[theorem]{Definition}

\newtheorem*{example}{Example}

\newcommand{\T}{\mathbb{T}}

\newcommand{\al} {\alpha}       
\newcommand{\be} {\beta}        
    
\newcommand{\de} {\delta}

\newcommand{\ka} {\kappa}

\newcommand{\si} {\sigma}       \newcommand{\Si}{\Sigma}

\newcommand{\Z}{\mathbb{Z}}

\newcommand{\R}{\mathbb{R}}
\newcommand{\eps}{\varepsilon}


\newcommand{\per}{\operatorname{Per}}

\begin{document}

\begin{abstract}
In this work we study the problem of positiveness of topological entropy for flows using pointwise dynamics. We show that the existence of a non-periodic nonwandering point of an expansive and non-singular flow with shadowing is a sufficient condition to to obtain positive topological entropy. Moreover, we can deal with flows with singularities, showing that the existence of a non-wandering, non-critical, strongly-shadowable, and uniform-
 expansive point implies the existence of a symbolic subshift. Finally, we discuss pointwise versions of some  shadowing-type properties.
\end{abstract}

\maketitle

\section{Introduction}

There are many ways to measure the complexity of a dynamical system.  Topological entropy is one of the most useful ways to do it.  Its positiveness has as consequence some chaotic behaviour. Hyperbolic systems are a rich source of systems with positive topolgical entropy. For instace,  Horseshoes are  examples of  hyperbolic dynamical systems with positive entropy and were defined by S. Smale in \cite{SM} . Actually, a Horseshoe appears if there exists a transversal homoclinic point and it is equivalent to symbolic shift systems with positive entropy. 

If we move from discrete to the continuous time setting, then the topological entropy is defined for a continuous flow in a natural way. A classical example of a flow with  positive entropy is the suspention of the two symbol  shift map. In \cite{bowen} Bowen and Walters defined expansive flows whose singularities are isolated points of the base space. Later, H.B. Keynes and M. Sears proved in \cite{sears} that Bowen-Walters-expansive flows have positive topological entropy if the base space has topological dimension greater than one. 

In this paper we give some sufficient conditions to ensure positiveness of topological entropy using pointwise dynamics.

Some global dynamical properties can be obtained through analogous properties defined in terms of points, this is what we call pointwise dynamics. For instance, transitivity can be obtained by the existence of a point with dense orbit. 

Apart from the topological entropy, a well known dynamical property is the shadowing property.  It says that trajectories which allow errors are approximated by real ones. A pointwise version of shadowing property was studied in \cite{morales}, where C.A. Morales gave the definition of shadowable points for homeomorphisms and  proved that for compact metric spaces the shadowing property is equivalent to all points be shadowable. The pointwise version of shadowing  for flows was defined in \cite{jesus}. 

In \cite{noix} the authors combined pointwise dynamical information about homeomorphisms in order to conclude positiveness of the topological entropy. The techniques developed there motivated us to try to extend them for flows. The main difficulties is that we need to deal with reparametrizations and the possibility of the existence of singularities. Actually, the latter can  obstruct the use of cross sections, needed to control some reparametrizations.


    

Our first result deals with global dynamics and complements a result due to Moriyasu \cite{moriyasu}.

\begin{mainA}
	Let $\phi_t$ be an expansive and non-singular continuous flow with the the shadowing property.  If $\Omega(\phi)\setminus Per(\phi)\neq\emptyset$ then there exists $Y\subset X$ such that $\phi|_Y$ is conjugated to a suspension of a subshift.
\end{mainA}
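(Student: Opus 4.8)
The plan is to reduce the statement to the discrete-time construction of \cite{noix}, applied to a Poincar\'e return map, and then to suspend the resulting subshift back up to the flow. The first step is to choose a convenient point to work at: by the spectral decomposition for expansive flows with the shadowing property (see \cite{moriyasu}), $\Omega(\phi)$ is a finite union of disjoint transitive basic sets, and the basic set $B$ containing a given point of $\Omega(\phi)\setminus Per(\phi)$ is not a single periodic orbit; picking a point $q$ whose orbit is dense in $B$, the point $q$ is recurrent and non-periodic. Reconciling the hypothesis, which only provides a nonwandering point, with the recurrence needed in order to pass to a cross-section is one of the points that requires some care.

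Since $\phi$ is non-singular, $q$ is a regular point, so it lies in the interior of an arbitrarily small codimension-one cross-section $\Sigma$ transverse to $\phi$ inside a flow box. Because $q$ is recurrent, its forward and backward orbits meet $\Sigma$ for an unbounded set of times, so $q$ belongs to the domain $\Sigma_0$ of the first-return map $f=P_\Sigma\colon\Sigma_0\to f(\Sigma_0)$, whose return-time function $\tau\colon\Sigma_0\to(0,\infty)$ is continuous and bounded away from $0$ on the flow box. Moreover $q$ is non-periodic for $f$ (an equality $f^{k}(q)=f^{j}(q)$ with $k>j$ would make some $\phi_{s}(q)$ a periodic point of $\phi$), and it belongs to the nonwandering set of $f$, since a near-return of a point of $\Omega(\phi)$ close to $q$ produces, through the flow box, an $f$-near-return to $q$.

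Then I would transfer the two hypotheses to $f$ on a neighbourhood of $q$. Expansiveness of $\phi$ makes $f$ expansive near $q$: two points of $\Sigma_0$ whose $f$-orbits stay close have $\phi$-orbits that stay close up to a bounded time reparametrization coming from the return times, hence lie on a single $\phi$-orbit and, being both on $\Sigma$, coincide. The shadowing property of $\phi$ yields shadowing for $f$ near $q$: an $f$-pseudo-orbit becomes a $\phi$-pseudo-orbit once the genuine flow segments of lengths $\tau$ are inserted between consecutive points, a true $\phi$-orbit shadowing it crosses $\Sigma$ once near each pseudo-orbit point, and those crossings form an $f$-orbit shadowing the original one. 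The hard part will be exactly this transfer, with its bookkeeping of reparametrizations; it is the non-singularity of $\phi$ that keeps the return times controlled on flow boxes and makes the translation go through.

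By the construction of \cite{noix} (which, from a non-periodic nonwandering point of an expansive homeomorphism with the shadowing property, produces near that point a closed invariant set on which the map is conjugate to a non-trivial subshift), we obtain a compact $f$-invariant set $\Lambda\subseteq\Sigma_0$, contained in the interior of the flow box, with $f|_{\Lambda}$ topologically conjugate to a subshift $\sigma_A\colon\Sigma_A\to\Sigma_A$. Since $\Lambda$ is compact, $\tau|_{\Lambda}$ is bounded, so $Y:=\{\phi_t(x):x\in\Lambda,\ 0\le t\le\tau(x)\}$ is a compact $\phi$-invariant subset of $X$, and the suspension of $f|_{\Lambda}$ under the roof $\tau|_{\Lambda}$ maps homeomorphically onto $Y$ via $[(x,t)]\mapsto\phi_t(x)$, conjugating its suspension flow with $\phi|_{Y}$; injectivity is immediate because $\tau$ is the first-return time to the genuine cross-section $\Sigma\supseteq\Lambda$. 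Composing with the conjugacy $f|_{\Lambda}\cong\sigma_A$, which carries $\tau|_{\Lambda}$ to a continuous positive roof function on $\Sigma_A$, we conclude that $\phi|_{Y}$ is conjugate to a suspension of the subshift $\sigma_A$, which is the required set $Y$.
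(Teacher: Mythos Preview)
Your strategy is genuinely different from the paper's. The paper never introduces a first-return map nor invokes the discrete result of \cite{noix}; it works entirely at the flow level. Starting from $p\in\Omega(\phi)\setminus Per(\phi)$ it uses Komuro's theorem (for non-singular flows shadowing implies strong shadowing) together with expansivity to manufacture two distinct periodic orbits $a,b$ near $p$, then for each $s\in\Sigma_2$ concatenates pieces of $O(a)$ and $O(b)$ into a flow pseudo-orbit $A_s$ which is strongly $\eps$-shadowed by a unique point $y_s$. The set $Y=\bigcup_{s,t}\phi^t(y_s)$ is shown to be compact (strong shadowing is what makes the reparametrizations equicontinuous here), and finally the global Keynes--Sears family of cross sections $\{S_i\subset T_i^*\}$ of Theorem~\ref{cross2} is used, via the Bowen--Walters symbolic coding, to exhibit $\phi|_Y$ as conjugate (not merely semiconjugate) to a suspension of a subshift.

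Your reduction to a return map is natural, but the step you yourself flag as ``the hard part'' is where the argument is genuinely incomplete. A single local cross section $\Sigma$ gives a first-return map $f$ defined only on an open subset $\Sigma_0\subsetneq\Sigma$, not a homeomorphism of a compact space, so \cite{noix} does not apply as stated; you must either verify that the construction there is purely local near $q$ and survives on a partially defined map, or replace $\Sigma$ by a global family of sections as in Theorem~\ref{cross2}, which is exactly the device the paper uses. Moreover, to turn a $\phi$-shadow of your lifted pseudo-orbit into an $f$-shadow you need the true orbit to cross $\Sigma$ exactly once near each $y_i$, and ordinary shadowing does not control the reparametrization enough to guarantee this; you would have to invoke Komuro's equivalence of shadowing and strong shadowing for non-singular flows, as the paper does implicitly. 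Once these two points are handled your route should go through; what it buys is modularity (the combinatorics are delegated to the discrete case), whereas the paper's direct construction makes the two periodic orbits and the $\Sigma_2$-coding explicit and provides a template reused verbatim in Theorems~B and~C.
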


Actually, such methods can be used in the pointwise scenario to obtain positiveness of the entropy, as in the next result.

\begin{mainB} 
	Let $\phi$ be a continuous flow without singularities. If there exists a point $p \in\Omega(\phi)\setminus Per(\phi)$  shadowable and uniformly-expansive, then  $h(\phi)>0$.
\end{mainB}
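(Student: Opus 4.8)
The plan is to reduce the statement to its discrete-time counterpart by passing to a local cross section at $p$, to apply the known pointwise result for homeomorphisms, and then to lift the resulting symbolic dynamics back to the flow. Since $\phi$ has no singularities, $p$ is a regular point, so it lies in a flow box: there is a cross section $\Sigma$ with $p\in\Sigma$, transverse to the flow, on which the first return time $\tau$ (defined on the set $\Sigma_R\subset\Sigma$ of points whose forward orbit meets $\Sigma$ again) is bounded below by some $\tau_0>0$; we shall only use orbit segments on which $\tau$ is also bounded above. Write $R\colon\Sigma_R\to\Sigma$ for the Poincar\'e return map and, abusing notation, $p$ for the same point viewed inside $\Sigma$.

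First I would transfer the four hypotheses from $\phi$ to $R$. The point $p$ is not periodic for $R$, for otherwise it would be periodic for $\phi$; and it is nonwandering for $R$ because $p\in\Omega(\phi)$, since a recurrence of the flow to a tubular neighbourhood of $p$ projects, along orbits, to a recurrence of $R$ to a neighbourhood of $p$ in $\Sigma$. It is shadowable for $R$: a genuine $\delta$-pseudo-orbit of $R$ through $p$ produces a $\delta'$-pseudo-orbit of $\phi$ through $p$ (concatenate the orbit segments of length $\approx\tau$ and absorb the small jumps inside $\Sigma$), which by shadowability of $p$ in the sense of \cite{jesus} is $\varepsilon'$-shadowed by a true orbit; the successive crossings of $\Sigma$ by that orbit then $\varepsilon$-shadow the original $R$-pseudo-orbit. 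Finally, uniform expansiveness of $p$ yields expansiveness of $R$ at $p$: two $R$-orbits staying $c$-close to the $R$-orbit of $p$ for all time flow out to two orbits staying $\varepsilon$-close, up to a reparametrization, to the orbit of $p$ for all time, hence they coincide with it near $p$, hence they are iterates of $p$ in $\Sigma$.

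With these four properties in hand I would invoke the main result of \cite{noix}, applied to $R$ restricted to a compact invariant subsystem $K$ (for instance the orbit closure of $p$), to conclude that it has positive topological entropy; concretely, the non-wandering property of $p$ yields a $\delta$-pseudo-periodic pattern through $p$, the failure of periodicity provides two genuinely different ways to traverse it, the shadowability of $p$ realizes every concatenation of these by a true orbit, and the uniform expansiveness of $p$ keeps the resulting orbits apart, producing exponentially many separated orbit segments. It then remains to lift: the positive-entropy subsystem $(K,R|_K)$ suspends, via the continuous roof function $\tau$, which on $K$ is bounded away from $0$ and $\infty$, to a compact $\phi$-invariant set $Y\subset X$ on which $\phi$ is, up to a time change, the suspension flow of $R|_K$. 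Since a suspension over such a roof of a positive-entropy base map has positive topological entropy, $h(\phi)\ge h(\phi|_Y)>0$.

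The main obstacle is the transfer step, and inside it the bookkeeping of reparametrizations. Flow shadowing and flow (uniform) expansiveness come with an a priori uncontrolled increasing time change $s$ with $s(0)=0$, and one must guarantee that $|s(t)-t|$ stays small enough --- along the possibly noncompact family of orbits actually used --- that the shadowing orbit crosses $\Sigma$ in an order-preserving bijection with the crossings of the pseudo-orbit, so that ``shadowing for $\phi$'' really becomes ``shadowing for $R$'', and so that ``$\varepsilon$-closeness up to reparametrization for $\phi$'' really becomes ``$c$-closeness for $R$''. Keeping $\tau$ finite and uniformly bounded along the relevant orbits is exactly where the absence of singularities is used, and is the delicate point; one can in principle bypass the cross section and build the separated orbit segments directly in $X$ by concatenating distinct $\delta$-loops at $p$ and shadowing, but then the same reparametrization estimates resurface as the need to match loop lengths.
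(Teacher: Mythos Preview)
Your reduction-to-a-Poincar\'e-map strategy is genuinely different from the paper's, and the obstacle you flag in the last paragraph is exactly where the argument breaks: you never actually carry out the transfer of pointwise shadowing from $\phi$ to $R$. Since Theorem~B only assumes ordinary shadowability of $p$ (reparametrizations $h\in Rep(\phi)$, not $Rep_\varepsilon(\phi)$), the shadow of a lifted $R$-pseudo-orbit may drift in time without bound, and you give no mechanism forcing its crossings of $\Sigma$ to match, one for one, the crossings of the pseudo-orbit. A second, related gap is the appeal to \cite{noix}: that result is stated for a homeomorphism of a compact metric space, whereas your $R$ is only partially defined on $\Sigma$, and the ``compact invariant subsystem $K$'' you propose (the orbit closure of $p$) is not available before you know that $p$ --- and all nearby points used in the shadowing --- actually return to $\Sigma$ infinitely often with bounded return times. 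Non-wandering does not give you this.

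The paper avoids both issues by working directly in the flow and never attempting to define a return map at $p$. It first uses shadowability and uniform expansiveness of $p$ to produce two distinct \emph{periodic} orbits $a,b$ near $p$ (shadow a periodic pseudo-orbit through $p$; the shadow is forced onto a closed orbit by expansiveness). Then, for every $s\in\Sigma_2$, it shadows the concatenation $A_s$ of copies of $(a,\pi(a))$ and $(b,\pi(b))$ by a point $y_s$, and sets $Y=\bigcup_s O(y_s)$. The key point --- which answers your final remark that ``the same reparametrization estimates resurface'' --- is that for \emph{periodic} $s$ the shadow $y_s$ is again periodic, so the uncontrolled reparametrization is absorbed into the period and plays no further role. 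One then has $2^n$ distinct periodic orbits coming from the $2^n$ periodic words of length $n$, and a global family of cross sections on $\overline Y$ (available because $\phi$ has no singularities) is used only \emph{after} the construction, solely to show that the periods $t_n$ grow at least linearly in $n$. A time-change argument then yields $h(\phi)>0$. In short, the paper uses cross sections as a measuring device on an already-built invariant set, not as a way to discretize the dynamics at $p$; this is precisely what lets it sidestep the reparametrization bookkeeping that blocks your route.
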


If the flow has singularities, we may not use the same technics. This is because if some singularity is aproximated by regular orbits, the reparametrizations of shadowing properties can distorts time a lot. But if we consider that  $p$ has a stronger form of  shadowing property we can overcome these difficulties. Moreover, we can prove that the flow has a symbolic subsystem.

\begin{mainC}
	Let $\phi$ be a continuous flow. If there exists  $x\in \Omega(\phi)\setminus Crit(\phi)$ a strongly-shadowable and uniformly-expansive point $x\in X$, then there exists a closed invariant set $Y\subset X$ such that $\phi|_Y$ is a conjugated to a supension of a subshift map with positive entropy.
\end{mainC}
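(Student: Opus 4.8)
The plan is to reduce the singular case to the non-singular situation already handled (morally, Theorem B together with the construction behind Theorem A), and then upgrade the conclusion from "positive entropy" to "contains a suspension of a subshift" by using the stronger shadowing hypothesis. First I would observe that since $x\in\Omega(\phi)\setminus\operatorname{Crit}(\phi)$, the point $x$ is regular, so it admits a local cross section $\Sigma$ transverse to the flow at $x$; because $x$ is not critical and the flow is continuous, on a small neighborhood of $x$ the flow looks like a product $\Sigma\times(-\varepsilon,\varepsilon)$, and in particular singularities of $\phi$ stay away from a whole flow-box around $x$. This is the key point that lets us work around the obstruction mentioned in the introduction: although the flow may have singularities, the relevant dynamics for building the symbolic subsystem takes place near the non-critical nonwandering point $x$, where cross sections are available.

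Next I would set up the return dynamics. Using the flow-box at $x$ and the fact that $x\in\Omega(\phi)$, I would produce arbitrarily long "almost return loops": finite pieces of orbit leaving $\Sigma$ near $x$ and returning $\delta$-close to $x$. Here the hypothesis that $x$ is \emph{uniformly-expansive} is what separates distinct return itineraries — it gives a fixed expansivity constant valid along all orbits passing through the flow-box (after the allowed time reparametrization), so two orbit segments that shadow the same sequence of return loops but eventually differ must have been $\varepsilon$-apart at a definite moment. Combined with \emph{strong shadowing} of $x$ (the quantitative/uniform-on-a-neighborhood version of pointwise shadowing for flows from \cite{jesus}), every $\delta$-pseudo-concatenation of these return loops is $\varepsilon$-shadowed by a genuine orbit, and by uniform expansivity distinct concatenations are shadowed by distinct orbits. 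One then takes the alphabet to be (a finite sub-collection of) the return loops — or just two of them, $0$ and $1$ — and the coding map $\pi:\{0,1\}^{\Z}\to X$ sending a bi-infinite word to the unique orbit shadowing that concatenation of loops, yielding a closed invariant set $Y=\overline{\bigcup_{t}\phi_t(\pi(\{0,1\}^{\Z}))}$ on which $\phi|_Y$ is conjugate (via a time change absorbed into the suspension construction à la Bowen–Walters) to the suspension of the full shift, or at worst a subshift of positive entropy. Positivity of the entropy is then automatic, either directly from the $2^n$ distinguishable itineraries giving an exponentially growing $(n,\varepsilon)$-separated set, or by invariance of entropy under conjugacy and the fact that a suspension of a subshift with positive entropy has positive flow entropy.

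The main obstacle I expect is \textbf{controlling the reparametrizations}. For flows, "conjugate to a suspension of a subshift" requires an honest topological conjugacy that respects the time direction only up to a continuous, positive reparametrization, and the pseudo-orbits we concatenate accumulate small timing errors at each return. The argument must show these errors stay bounded and depend continuously on the symbolic sequence, so that the roof function of the suspension is well-defined and continuous; this is where the \emph{strong} form of shadowing (as opposed to ordinary pointwise shadowing) is essential, and where the flow-box structure near the non-critical point $x$ is used to convert "closeness of orbits" into "closeness after a controlled time change". A secondary technical point is ensuring the coding map $\pi$ is continuous and injective on a shift-invariant closed set: continuity follows from uniform shadowing (small change in the word $\Rightarrow$ long initial agreement $\Rightarrow$ orbits stay close by shadowing), and injectivity from uniform expansivity, exactly as in the discrete-time argument of \cite{noix} that motivates the paper; the novelty here is carrying it through the reparametrized, possibly-singular setting, which the localization near $x$ makes possible.
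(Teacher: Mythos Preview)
Your overall strategy is correct and close to the paper's: build two periodic ``return loops'' $a,b$ near $p$ via nonwandering plus shadowing plus uniform expansivity, code all $\{0,1\}^{\Z}$-concatenations by their unique shadows $y_s$, take $Y=\bigcup_t\phi^t\{y_s\}$, and read off a suspension of a subshift.  The paper is more modular than you are: after building $Y$ exactly as in Theorem~B, it proves $Y$ is \emph{closed} (Arzel\`a--Ascoli on the reparametrizations, which is where $h\in Rep_{2e}(\phi)$ is used), observes $Y$ is \emph{singularity-free}, and then simply invokes Theorem~A on the restricted flow $\phi|_Y$ to obtain the conjugacy and Theorem~B to obtain positive entropy.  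Your direct construction of the coding map and roof function would also work, but the reduction to the already-proved global theorems is cleaner.

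There is one genuine gap and one terminological slip.  The gap: your argument that singularities ``stay away from a whole flow-box around $x$'' is not enough, because the orbits $\{\phi^t(y_s)\}$ composing $Y$ are not confined to that flow-box --- they wander wherever the orbits of $a$ and $b$ go.  What actually keeps $Y$ disjoint from $\operatorname{Sing}(\phi)$ is the strong-shadowing hypothesis itself: since each $y_s$ shadows the (non-singular) pseudo-orbit $A_s$ with a reparametrization $h\in Rep_{2e}(\phi)$, the orbit of $y_s$ moves with speed bounded below by $1-2e>0$ relative to the pseudo-orbit, hence cannot stall at a singularity, and the same equicontinuity passes to the closure.  This is the mechanism, not the local flow-box.

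The slip: ``strong shadowing'' in this paper is \emph{not} a uniform-on-a-neighborhood version of pointwise shadowing; it is ordinary pointwise shadowing with the additional requirement that the shadowing reparametrization lies in $Rep_\varepsilon(\phi)$ (i.e.\ is $\varepsilon$-close to the identity in slope).  You eventually use it correctly (``controlled time change''), but your parenthetical gloss is wrong and would mislead a reader.  That $Rep_\varepsilon$ control is precisely what gives equicontinuity of $\{h_n\}$, hence closedness of $Y$, hence applicability of the Bowen--Walters cross-section machinery on $Y$; ordinary shadowing would not suffice here because arbitrary reparametrizations need not form an equicontinuous family.
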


Since subshifts carry a lot of well known topological and statistical information, the previous theorem  improves our knowledge about the behavior of $\phi$. 

\begin{example}
	In order to obtain a non-trivial example of flows for which theorems B and $C$ applies, let us consider a linear Anosov diffeomorphism $f$ of $\T^2$. Then we can blow up  the fixed point of $f$ into a closed disc $D$ and extend $f$ to $D$ as the identity map, obtaining a homeomorphism $\overline{f}$. 
If we consider the suspension flow of $\overline{f}$, then the suspension of all non-wandering points away from the disc $D$ are under the hypothesis of Theorem B. The same is not true for the suspension of the points in $D$. 

In addition, if we define a new flow adding a singularity on the orbit of some point in interior of $D$ we are under the hypothesis of Theorem C.
	
\end{example}

Finally, we also discuss the possibility to obtain pointwise versions of other shadowing-type properties allowing gaps. Actually, we prove that if some shadowing-type property allows large spatial gaps, then we cannot extend the definition of shadowable point to this property.

This paper is organized as follow:
 
In section 2 we present some general definitions and results of continuous flow theory and define precisely the concepts of shadowable and strong-shadowable points.

In section 3 we prove Theorem $A$. 

In section 4 we define and discuss pointwise expansivity and uniform-expansive points.

In section 5 we give a proof for theorem B. 

In section 6 we prove theorem C.

In section 7 we  discuss the possibility to extend the definition of shadowable points to other shadowing-type properties. 
 
 
\section{Preliminaries}

In this section we  define the objects we will work with and  give some general results of continuous flows theory.  

\vspace{0.1in}
\emph{Continuous Flows}

Let $(X,d)$ be a compact metric space. A \textit{continuous flow} on $X$ is a continuous map $\phi:\R \times X \to X$ such that 
$\phi(0,x)=x$  and $\phi(t+s,x)=\phi(s,\phi(t,x))$ for every $x\in X$ and $t,s\in \R$. Let $\phi^t:X\to X$ denote the homeomorphism obtained fixing the time $t$ in the flow $\phi$. The orbit of a point $x\in X$ is the set $O(x)=\cup_{t\in\R}\{\phi^t(x)\}$. 

A point $x\in X$ is  periodic for $\phi$ if there exists $T>0$ such that $\phi^{t+T}(x)=\phi^t(x)$ for every $t\in \R$.  The period $\pi(x)$ of $x$ is the infimum of $T\geq0$ such that previous condition is satisfied. If $\pi(x)=0$ say that $x$ is a singularity. We denote $Sing(\phi)$ for the set of singularities of $\phi$, $Per(\phi)$ for the set of periodic points of $\phi$. We say that $x$ is a critical point for $\phi$, if it belongs to the set $Crit(\phi)=Sing(\phi)\cup \per(\phi)$. 

A \textit{reparametrization} of $\phi$ is an increasing homeomorphism $h:\R\to \R$ fixing the origin. Denote $Rep(\phi)$ the set of reparametrizations of $\phi$. Let us denote $$Rep_{\eps}(\phi)=\{h\in Rep(\phi);|\frac{h(x)-h(y)}{x-y} -1|\leq \eps\}.$$

We say that $x\in X$ is a wandering point for $\phi$ if there are a neighborhood $U$ of $x$ and a time $T>0$ such that $\phi^t(x)\notin U$ for every $t\geq T$. We say that $x$ is a non-wandering point for $\phi$ if it is not a wandering point for $\phi$. 
Let $\Omega(\phi)$ denote the set of non-wandering points of $\phi$.

\vspace{0.1in}

\emph{Cross Sections}

A \emph{cross section of time $e$} is a compact set  $T\subset X$ such that  for every $x\in T$, one has $\phi^{[-e,e]}(x)\cap T=\{x\}$. We denote the interior $T^*$ of $T$ for the intersection of $T$ with the interior of $\phi^{[-e,e]}(T)$.

In \cite{bowen} the authors proved that for any non-singular flow we can chose a finite family of cross-sections which "generates" $\phi$. Precisely, we have the following:

\begin{theorem}\cite{bowen}\label{cross}
	There exists $e>0$ such that the following holds: For every $\alpha>0$ there exists a finite family $\mathcal{T}$ of disjoint cross sections of time $e$ and diameter at most $\alpha$ such that $X=\phi^{[-\alpha,0]}(T^+)=\phi^{[0,\alpha]}(T^+)$ where $T^+=\cup\mathcal{T}$.
\end{theorem}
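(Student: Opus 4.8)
The plan is to produce the family $\mathcal{T}$ in three stages: fix a uniform time $e$ for which local transversals exist through every point; extract from these a finite family whose flow-boxes cover $X$ with prescribed diameter and small time-width; and then cut the overlapping pieces into pairwise disjoint sections without opening gaps in the resulting cover. For the first stage I would use that $\phi$ is nonsingular on the compact space $X$: a compactness argument rules out periodic orbits of arbitrarily small period (if $p_n$ had period $\pi_n\to 0$ and $p_n\to q$, then for each $t$ one has $\phi^t(p_n)=\phi^{t\bmod\pi_n}(p_n)\to q$, so $q$ would be fixed by every $\phi^t$, a singularity). Hence there is $e>0$ with every period exceeding $2e$; in particular $t\mapsto\phi^t(x)$ is injective on $[-e,e]$ for each $x$. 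This nondegeneracy is exactly what the classical construction of local cross sections through regular points of a continuous flow (Whitney, Bebutov) requires: averaging a suitable continuous function along the orbit yields a local time-coordinate whose zero level set is a compact $D_x\ni x$ with $\phi^{[-e,e]}(y)\cap D_x=\{y\}$ for $y\in D_x$; shrinking $D_x$ inside its section-plaque I keep $x\in D_x^{*}$ while forcing $\operatorname{diam}D_x<\alpha$.

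Second, fix a small $\beta$, say $\beta\le\tfrac14\min\{\alpha,e\}$, so that each flow-box $\phi^{(-\beta,\beta)}(D_x^{*})$ is an embedded open neighborhood of $x$ (here the interior $D_x^{*}$ is what makes the saturation a neighborhood, and $\beta\le e$ guarantees the box is crossed only once). These boxes cover $X$, so by compactness finitely many $S_1:=D_{x_1},\dots,S_k:=D_{x_k}$ already satisfy $X=\bigcup_i\phi^{(-\beta,\beta)}(S_i^{*})$. Pushing any point forward (resp.\ backward) by $2\beta$ lands it in some box and hence produces a crossing of $\bigcup_iS_i$ within forward (resp.\ backward) time $3\beta<\alpha$.

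The main obstacle is that the $S_i$ need not be pairwise disjoint, and overlaps must be removed without destroying the covering just recorded. I would disjointify inductively: set $S_1'=S_1$, and having produced pairwise disjoint $S_1',\dots,S_{i-1}'$, choose a small open neighborhood $O_{<i}$ of their union and a tiny $\delta>0$ and put $S_i'=S_i\setminus\phi^{(-\delta,\delta)}(O_{<i})$. Since $\phi^{(-\delta,\delta)}(O_{<i})=\bigcup_{|t|<\delta}\phi^t(O_{<i})$ is a union of open sets it is open, so each $S_i'$ is a compact subset of the cross section $S_i$ — hence itself a cross section of time $e$ of diameter $<\alpha$ — and $S_i'$ is disjoint from every earlier $S_j'$ because $S_j'\subseteq O_{<i}$ was removed. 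The delicate point, and the real content of the argument, is that truncation must not lose crossings: if $z\in S_i$ is deleted then $z\in\phi^{(-\delta,\delta)}(O_{<i})$, so the orbit of $z$ meets an earlier section $S_j'$ within time $\delta$, and that surviving crossing replaces the lost one at the cost of shifting its time by at most $\delta$.

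Finally I would verify the saturation for $\mathcal{T}=\{S_i':S_i'\neq\emptyset\}$ and $T^{+}=\bigcup_iS_i'$. By the first two stages every orbit meets $\bigcup_iS_i$ within forward and within backward time $3\beta$, and disjointification moves each such crossing by at most $\delta$; choosing $\delta$ small enough that $3\beta+\delta<\alpha$, every point of $X$ therefore reaches $T^{+}$ within forward time $\alpha$ and within backward time $\alpha$. The first says $\phi^{s}(z)\in T^{+}$ for some $s\in[0,\alpha]$, i.e. $z\in\phi^{[-\alpha,0]}(T^{+})$, and the second gives $z\in\phi^{[0,\alpha]}(T^{+})$; together with the disjointness, the time-$e$ section property, and $\operatorname{diam}<\alpha$, this is exactly the asserted finite family.
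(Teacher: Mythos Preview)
The paper does not supply its own proof of this statement: Theorem~\ref{cross} is quoted in the preliminaries as a result of Bowen and Walters, with the reference \cite{bowen}, and no argument is given. So there is nothing in the paper to compare your proposal against; what you have written is essentially a reconstruction of the classical Bowen--Walters construction of a generating family of cross sections.

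As a sketch of that construction your outline is sound, but one step deserves tightening. In your disjointification you remove from $S_i$ the set $\phi^{(-\delta,\delta)}(O_{<i})$, where $O_{<i}$ is an arbitrary small open neighborhood of $\bigcup_{j<i}S_j'$. A deleted point $z\in S_i$ then satisfies $\phi^{t}(z)\in O_{<i}$ for some $|t|<\delta$; but $O_{<i}$ is merely \emph{near} the earlier sections, so this does not by itself produce a crossing of any $S_j'$ within time $\delta$, and your ``replacement crossing'' claim is not justified as written. The usual remedy is to take $O_{<i}$ of the specific form $\phi^{(-\delta',\delta')}\bigl(\bigcup_{j<i}(S_j')^{*}\bigr)$ for some $\delta'<\delta$: this set is open (it is a flow-saturation of a relatively open set inside the flow-boxes), and membership in it immediately yields a genuine crossing of some $(S_j')^{*}\subset S_j'$ within time $\delta'$. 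With that choice your accounting (each lost crossing replaced at a time-cost at most $\delta$, and $3\beta+\delta<\alpha$) goes through and the rest of the argument is fine.
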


\textit{Remark:} If we put $\be=\sup\{t>0; \forall x\in T^+, \phi^t(x)\notin T^+\}$, then $0<\be<\al$ and once a point $x\in X$ crosses a cross section $T\in \mathcal{T}$, it takes at least time $\be$ to cross another cross section.

In \cite{sears}, the authors slightly improved previous theorem and obtained the following.

\begin{theorem}\cite{sears}\label{cross2}
	There exists $e>0$ such that the following holds: For every $\alpha>0$ there are a finite families $\mathcal{T}=\{T_1,...,T_k\}$ $\mathcal{S}=\{S_1,...,S_k\}$ of disjoint cross sections of time $e$ and diameter at most $\alpha$ such that $S_i\subset  T_i^*$ for $i=1,...,k$ and   $X=\phi^{[-\alpha,0]}(T^+)=\phi^{[0,\alpha]}(T^+)=\phi^{[-\alpha,0]}(S^+)=\phi^{[0,\alpha]}(S^+)$.
\end{theorem}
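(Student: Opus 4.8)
This is the classical cross-section theorem for non-singular continuous flows in the sharpened form of Keynes and Sears, so the plan is to follow the Bowen--Walters scheme (a finite flow-box cover of $X$, cut apart into disjoint sections) and add one transverse enlargement step to produce the nested pair $\mathcal{S}\subset\mathcal{T}$. First I would fix $e$: since $\phi$ has no singularities, the tubular flow (flow-box) theorem gives at each $x\in X$ a compact cross section through $x$, of arbitrarily small diameter, with $x$ in its interior, and since a fixed-point-free continuous flow on a compact metric space has no periodic orbits of arbitrarily small period (a limit of periodic points whose periods tend to $0$ would be a singularity), a compactness argument produces one $e>0$ that serves as the section time at every point.

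Next, fix $\al>0$ and set $\be=\al/3$. At each $x$ pick a cross section $D_x$ of time $e$, diameter $<\be$, with $x\in D_x^{*}$. The sets $\phi^{(-\be,\be)}(D_x^{*})$ are open neighbourhoods of the respective points $x$, hence cover $X$, so finitely many of them, coming from $D_1,\dots,D_N$, already cover $X$; consequently every orbit arc of length $2\be$ meets $\bigcup_iD_i^{*}$, which is to say $X=\phi^{[-2\be,0]}(\bigcup_iD_i^{*})=\phi^{[0,2\be]}(\bigcup_iD_i^{*})$.

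The $D_i$ need not be pairwise disjoint, and this is where the real work lies. Following Bowen--Walters I would cut them apart one pair at a time: whenever $D_i$ meets $D_j$, or a piece of $D_j$ is reached from $D_i$ inside a short time, remove from $D_j$ a small open flow-thickened neighbourhood of the offending set and replace $D_j$ by the closure of what survives. A closed subset of a cross section of time $e$ is again a cross section of time $e$, so the surviving sets are still cross sections of time $e$ and diameter $<\be$; the point to verify is that every orbit point lost in a cut is recovered by flowing some surviving section for a bounded extra time, and that this extra time can be absorbed into the slack $\al-2\be=\al/3$. Writing $\mathcal{S}=\{S_1,\dots,S_k\}$ for the surviving, now pairwise disjoint, sections and $S^{+}=\bigcup_iS_i$, one gets $X=\phi^{[-\al,0]}(S^{+})=\phi^{[0,\al]}(S^{+})$.

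Finally, since the $S_i$ are compact and pairwise disjoint, fattening each of them transversally by a sufficiently small amount --- using the flow-box theorem once more --- produces cross sections $T_i$ of time $e$, diameter $<\al$, with $S_i\subset T_i^{*}$ and with the $T_i$ still pairwise disjoint; a larger family covers at least as well, so $X=\phi^{[-\al,0]}(T^{+})=\phi^{[0,\al]}(T^{+})$ too, and $\mathcal{T}=\{T_1,\dots,T_k\}$ together with $\mathcal{S}=\{S_1,\dots,S_k\}$ are the families claimed. The main obstacle is the cutting step: one has to make the sections pairwise disjoint while simultaneously keeping them compact cross sections of the fixed time $e$ and not losing so much of $X$ that covering at time $\al$ fails --- it is this bookkeeping that dictates the room-to-spare choice $\be=\al/3$; the remaining ingredients (the uniform $e$, the finite subcover, and the transverse fattening) are routine.
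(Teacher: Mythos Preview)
The paper does not give its own proof of this statement: it is quoted verbatim as a result of Keynes and Sears \cite{sears}, introduced with ``the authors slightly improved previous theorem and obtained the following,'' and the text moves on immediately to topological entropy. There is therefore no in-paper argument to compare your proposal against.

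For what it is worth, your outline follows the standard Bowen--Walters/Keynes--Sears route: a uniform section time $e$ from the absence of singularities, a finite flow-box cover, the cutting procedure to make the sections pairwise disjoint while retaining the covering property with slack, and finally a transverse enlargement to produce the nested pair $S_i\subset T_i^{*}$. That is essentially how the cited references proceed, and the place you identify as the main obstacle (the bookkeeping in the cutting step) is indeed where the work lies. One small caution on the last step: when you fatten $S_i$ to $T_i$ you must ensure the enlarged set is still a cross section of the \emph{same} time $e$, which is guaranteed only if the fattening stays inside the flow box that produced the original local section; this is easy to arrange but should be said.
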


\vspace{0.1in}

\emph{Topological Entropy}

Let $V$ be a subset of $X$. A set $E\subset V$ is called a $(t,\eps)$-separated set if for every pair of distinct points $x,y\in E$, there exists some $0\leq u\leq t$ such that  $d(\phi^u(x),\phi^u(y))\geq \eps$. We set $s_t(V,\eps)$ as  the maximal cardinality of all $(t,\eps)$-separated subsets $E\subset V$.

The \emph{topological entropy} of $f$ is the following quantity
$$h(f)=\lim\limits_{\eps\to 0}\limsup\limits_{t\to\infty}\frac{1}{t}\log s_t(X,\eps).$$
 
This limit always exists, see \cite{WAL}. 

\vspace {0.1in}

\emph{The shift map and Supension flows}

Let $\mathcal{A}$ be a finite alphabet. We denote $\Si_{\mathcal{A}}=\mathcal{A}^{\Z}$ for the set of bilateral sequences formed by the symbols in $\mathcal{A}$ endowed with the metric $d(s,t)=\Sigma_{i\in \Z}\frac{1}{2^{|i|}}|s_i-t_i|$, this is a compact metric space. We denote $\Sigma_2=\{0,1\}^{\Z}$.

	The map $\si:\Si_{\mathcal{A}}\to\Si_{\mathcal{A}}$ defined by $\si((s_i))=(s_{i+1})$ is called the \textit{shift map}. 

Let $f:X\to X$ be a homeomorphism and let $r:X\to \R^+$ be continuous map. Let us consider the quotient space 
$$X_r=\{(t,x);0\leq t \leq r(x),x\in X\}/(r(x),x)\sim(0,r(x)).$$
 One can define a metric in order to made $X_r$ a compact metric space(See \cite{thomas} for details).

 We define the suspension flow of $f$ with roof $r$ as the flow $\phi_{\sigma}^t:X_r\to X_r$ induced by the translation $T_t(s,x)=(s+t,x)$. It is well known that the  $h(\phi_{\sigma})=\log\#\mathcal{A}$.   

\vspace {0.1in}

\emph{Expansivity}

The first definition of expansivity for continuous flows is due to Bowen and  Walters in \cite{bowen}. For their definition, singular points are topologically isolated. In \cite{komuro} Komuro defined another kind of expansivity whose most remarkable  example is the Lorenz Attractor. Indeed, Komuro-Expansivity allows the existence of singular points accumulated by regular orbits. It is well known that any Bowen-Walters-expansive flow is a Komuro-expasive flow and in absence of singularities the two definitions are equivalent. 
In this work we will always deal with Komuro-expansive flows and we will always call it expansive flows.

We say that  $\phi$  is an \textit{expansive flow} if  for every $\eps>0$ there exists $\de>0$  such that if $d(\phi^t(x),\phi^{h(t)}(y))<\de$ for every $t\in \R$ and some $h\in Rep(\phi)$, then $y=\phi^t(z)$ with $z\in O(x)$ and $|t|<\eps$.

\vspace{0.1in}
 
\emph{Shadowing}

 A sequence $(x_i,t_i)_{i=a}^b$ with $-\infty\leq a < b \leq \infty$ is a $(\delta,T)$-pseudo-orbit for $\phi$, if $t_i\geq T$ and  $d(\phi^{t_i}(x_i),x_{i+1})<\delta$ for every $a\leq i\leq b$ .
 Let $(x_i,t_i)_{i=a}^b$ be a $(\de,T)$-pseudo-orbit. Define $s_i=\sum_{n=0}^{i-1}t_n$ if $i\geq0$ and $s_i=\sum_{n=i}^{-1}t_n$ if $i<0$.

 \begin{itemize}
 	\item We say that $(x_i,t_i)_{i=a}^b$ is $\eps$-shadowed if there exists a point $x$ and  $h\in Rep(\phi)$ such that $d(\phi^{h(t)}(x), \phi^{t-s_i}(x_i))<\eps$ for every $i$ and $s_{i}\leq t \leq s_{i+1}$.
 	
 	\item We say that $(x_i,t_i)_{i=a}^b$ is $\eps$-strongly-shadowed if there exists a point $x$ and  $h\in Rep_{\eps}(\phi)$ such that $d(\phi^{h(t)}(x), \phi^{t-s_i}(x_i))<\eps$ for every $i$ and $s_{i}\leq t \leq s_{i+1}$.

 \end{itemize}

Now we can define the shadowing properties:
\vspace{0.1in}

We say that $\phi$ has the \textit{shadowing property} if for every $\eps>0$ there exists $\delta>0$ such that any $(\delta,1)$-pseudo-orbit is  $\eps$-shadowed.
\vspace{0.1in} 
  
 We say that $\phi$ has the \textit{strong-shadowing property} if for every $\eps>0$ there exists $\delta>0$ such that any $(\delta,T)$-pseudo-orbit is  $\eps$-strongly-shadowed.

 It is well known that we can replace $(\de,1)$-pseudo-orbit for $(\de,T)$-pseudo orbit for every $T>0$ in the definition of shadowing or strong-shadowing. 

In \cite{komuro2} the M. Komuro proved a relation betwenn shadowing and strong shadowing for flows without singularities.

\begin{theorem}\cite{komuro2}
	Let $\phi$ be a continuous flow without singularities. Then are equivalent:
	\begin{enumerate}
		\item $\phi$ has the shadowing property
		\item For every $\eps>0$, there are $T>0$ and $\de>0$ such that every $\de$-$T$-pseudo orbit of $\phi$ is $\eps$-strongly-shadowed.
	\end{enumerate}
\end{theorem}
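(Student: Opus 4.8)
The plan is to prove the two implications separately: $(2)\Rightarrow(1)$ is soft, while $(1)\Rightarrow(2)$ carries the content. For $(2)\Rightarrow(1)$, note that $Rep_{\eps}(\phi)\subset Rep(\phi)$, so every $\eps$-strongly-shadowed pseudo-orbit is a fortiori $\eps$-shadowed; hence $(2)$ supplies, for each $\eps>0$, constants $T,\de>0$ for which every $(\de,T)$-pseudo-orbit is $\eps$-shadowed, and by the quoted equivalence between $(\de,1)$- and $(\de,T)$-pseudo-orbits in the definition of shadowing, $\phi$ has the shadowing property.

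For $(1)\Rightarrow(2)$, fix $\eps>0$ and first set up the cross-section data. Since $\phi$ is non-singular, apply Theorem \ref{cross2} with a parameter $\al>0$ chosen much smaller than $\eps$: this gives families $\mathcal{T}=\{T_1,\dots,T_k\}\supset\mathcal{S}=\{S_1,\dots,S_k\}$ of disjoint cross sections of time $e$ and diameter $\le\al$ with $S_i\subset T_i^*$ and $X=\phi^{[0,\al]}(S^+)=\phi^{[-\al,0]}(S^+)$, and by the remark following Theorem \ref{cross} a minimal return time $\be>0$, so the successive crossing times of $S^+$ along any orbit have gaps in $[\be,\al]$. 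Crucially (this is where the absence of singularities enters), the first-return time and first-return map to $S^+$ are continuous near $S^+$; fix a modulus $\rho=\rho(\eps,\al)$ so small that any two points within $\rho$ of each other that both lie $\rho$-close to $S^+$ cross the \emph{same} section of $\mathcal{S}$ within time $\ll\be$ of one another and land $\ll\al$-close on $S^+$. Then take $\de_0>0$ from the shadowing property for a shadowing accuracy $\eps_0\ll\rho$, and set $\de=\de_0$ together with $T$ large.

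Given a $(\de,T)$-pseudo-orbit $(x_i,t_i)$, let $x$ and $h\in Rep(\phi)$ realize the $\eps_0$-shadowing. Along the $i$-th segment the orbit of $x_i$ meets $S^+$ at times $0\le\tau^i_1<\dots<\tau^i_{m_i}\le t_i$ with consecutive gaps in $[\be,\al]$; since $\phi^{h(s_i+\tau^i_k)}(x)$ is $\ll\rho$-close to the crossing point $\phi^{\tau^i_k}(x_i)\in S^+$, the orbit of $x$ crosses the same section at a time $\si^i_k$ within $\ll\be$ of $h(s_i+\tau^i_k)$. A continuity/induction argument along the orbit (using that $\be$ bounds the crossing gaps from below and the return map is stable near $S^+$) shows these crossings match bijectively with those of the pseudo-orbit away from the sparse jump instants $s_i$, with only a bounded-time discrepancy near each $s_i$. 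By continuity of the return map, for two matched consecutive crossings the transit time of $x$'s orbit is $\ll\al$-close to $\tau^i_{k+1}-\tau^i_k\ge\be$, so $\si^i_{k+1}-\si^i_k$ differs from $\tau^i_{k+1}-\tau^i_k$ by a factor within $\eps$ of $1$ once $\al$ is small enough. I would then build $g$ affine on each interval between consecutive matched crossing times $s_i+\tau^i_k$, mapping it linearly onto $[\si^i_k,\si^i_{k+1}]$, and near each jump $s_i$ absorb the bounded discrepancy by spreading it over a fixed number of inter-crossing intervals there (costing each an extra $O(\eps)$ of slope, harmless after halving $\eps$; here is where $T$ large leaves room). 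Then every slope of $g$ lies in $[1-\eps,1+\eps]$, so $g\in Rep_{\eps}(\phi)$; and since at each matched crossing the points $\phi^{g(t)}(x)$ and $\phi^{t-s_i}(x_i)$ are $\ll\al$-apart while the flow is uniformly continuous over time intervals of length $O(\al)$, the orbit $\phi^{g(t)}(x)$ stays within $\eps$ of $\phi^{t-s_i}(x_i)$ for every $t$. Hence the pseudo-orbit is $\eps$-strongly-shadowed, completing $(1)\Rightarrow(2)$.

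The step I expect to be the main obstacle is the crossing-matching: ruling out that $\phi^{h(\cdot)}(x)$ gains or drops a crossing of $S^+$ relative to the pseudo-orbit near the edge of a section, and bounding the distortion created at the pseudo-orbit jumps. This is exactly what the refined sections $S_i\subset T_i^*$ of Theorem \ref{cross2} are for (crossings occur in the interior, where the return map is stable), together with the choice of $T$ large, which makes the jumps sparse and the interpolation near them cheap. Non-singularity is used decisively throughout: it is what keeps return times to $S^+$ bounded and the return map uniformly continuous.
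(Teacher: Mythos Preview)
The paper does not give its own proof of this statement: it is quoted verbatim as a result of Komuro \cite{komuro2} and used as a black box, with no argument supplied. There is therefore nothing in the paper to compare your proposal against.

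For what it is worth, your sketch follows the standard strategy one expects for this kind of result (and, in outline, the one in Komuro's original paper): use a complete family of cross sections from Theorem~\ref{cross2} to discretize both the pseudo-orbit and the shadowing orbit, match their successive crossings of $S^+$, and build a new reparametrization $g$ piecewise-linearly on the crossing times so that the slopes are controlled by the ratio of consecutive transit times. The point you flag as the main obstacle---ensuring a bijective matching of crossings near section boundaries and absorbing the discrepancy at the jump times $s_i$---is indeed where the work lies, and your use of $S_i\subset T_i^*$ together with $T$ large is the right lever. As a sketch this is sound; turning it into a full proof requires making the ``continuity/induction argument'' and the interpolation near the jumps quantitative, but there is no visible gap in the plan.
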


 \vspace{0.1in}
\emph{Shadowable Points} 
\vspace{0.1in}

We say that a point $x$ is a \textit{shadowable point} of $\phi$ if for every $\eps>0$ there is a $\de>0$ such that every $(\de,1)$-pseudo-orbit $(x_i,t_i)$ satisfying $x=x_0$ is $\eps$-shadowed. 

\vspace{0.1in}
We say that a point $x$ is a \textit{strongly-shadowable point} of $\phi$ if for every $\eps>0$ there is a $\de>0$ such that every $(\de,1)$-pseudo-orbit $(x_i,t_i)$ satisfying $x=x_0$ is $\eps$-strongly-shadowed.

In \cite{jesus} the authors showed that shadowing property for flows is equivalent to all points be shadowable. It is obvious that the same can be done for strongly-shadowable points. 

\vspace{0.1in}

\section{Proof of Theorem A}

Now we begin to prove theorem A.
We begin choosing $p\in \Omega(\phi)\setminus Per(\phi)$ and fix $\xi>0$ as in Lemma \ref{cross}. For this $\xi$ let $8e>0$ be the expansivity constant of $\phi$. Now let $\{T_1,...,T_i\}$ and $\{S_1,...,S_k\}$ be two families of cross sections of time $\xi$ satisfying the following conditions:
\begin{enumerate}
	\item $S_i\subset T_i^*$ for $i=1,2,...,K$
	\item $diam(T_i)<e$ for $i=1,2,..,K$
	\item $X=\phi^{[-e,0]}(T^+)=\phi^{[0,e]}(T^+)=\phi^{[-e,0]}(S^+)=\phi^{[0,e]}(S^+)$ 
\end{enumerate}

Consider $\be=\sup\{t>0;x\in T^+ \Rightarrow\phi^t(x)\notin T^+ \}$ and $0<2\rho<\be$. Let us define the natural projection $P_i:\phi^{[-\rho,\rho]}(T_i)\to T_i$ by $P_i(y)=\phi^t(y)\in T_i$ with $|t|<\rho$. Since each $S_i$ is compact, we can choose $0<2\eps<e$ such that $B_{\eps}(S_i)\in \phi^{[-\rho,\rho]}(T_i^*)$ for $i=1,2,..,k$.

For $\eps>0$ let $T_1>0$ and $\de_1>0$ such that every $\de_1$-$T_1$-pseudo orbit is $\eps$-strongly-shadowed. 

We will use the shadowing property twice in this proof, so let $0<2\de_2<\de_1$ and $T_2>0$ be such that every $\de_2$-$T_2$-pseudo orbit  is $\de_1$-strongly-shadowable.
Let $0<2\eta<\de_2$ be such that if $d(x,y)<\eta$ then $d(\phi^t(x),\phi)^t(y))<\de_2$ for any $x,y\in X$ and $t\in [-T_2,T_2]$. 

Since $p$ is a non-wandering point, there are $x_a\in B_{\eta}(p)$ and $t_a>T_2$ such that $\phi^{t_a}(x_a)\in B_{\de}(p)$. 
Next, we define the following set: 
\begin{equation*}
A=\{...,(p,T_2),(\phi^{T_2}(x_a),t_a-T_2),(p,T_2),(\phi^{T_2}(x_a),t_a-T_2),...\}. 
\end{equation*}

We claim that $A$ is a $\de_2$-$T_2$-pseudo-orbit. Indeed, by the choice of $\eta$ we have $d(\phi^{T_2}(p),\phi^{T_2}(x_a))<\de_2$ and $d(\phi^{t_a}(x_a),p)<\eta<\de_2$

The shadowing property  implies the existence of a point $a\in B_{\de_1}(b)$ which $\de_1$-shadows $A$. More precisely, there exists a reparametrization $h\in Rep_{\de_1}(\phi)$  such that:
\begin{itemize}
	\item  If $t\in[it_0,it_0+1]$, then $d(\phi^{h(t)}(a),\phi^{t'}(p))\leq \de_1$ with $t'=t-it_0$.  
	\item If $t\in [it_0+1,(i+1)t_0]$, then $d(\phi^{h(t)}(a),\phi^{t'}(x_a)\leq \de_1$ with $t'=t-(i+1)t_0$.
\end{itemize}

Notice that $d(\phi^{h(t)}(a),\phi^{h(t+t_a)}(a))<2\de_1<e$ for $t\in \R$. Then  expansivity implies $\phi^{h(t_a)}\in O(a)$. Thus $a$ is a periodic point and it must be different from $p$.

Set $\eps'=d(p,O(a))$ and let $0<2\de_3<\eps'$ and $T_3>T_2$ be given by the $\eps'$-strong-shadowing . 

Now, let $T>T_3$ be such that $d(\phi^T(a),\phi^T(p))>8e$ by expansivity. 

Let $0<2\eta'<\de_3$ be such that $d(\phi^t(x),\phi^t(y))<\de_3$ if $d(y,z)<\eta'$ and $t\in [-T,T]$.

Since $x$ is a non-wandering point, we can choose $x_b\in B_{\eta'}(p)$  such that there exists $t_b>T$ satisfying $\phi^{t_b}\in B_{\eta'}(p)$.

If we repeat the steps to construct $a$, we can find a periodic point $b\in B_{\eps}(x)$ different from $x$ and $a$  which satisfies $d(\phi^T(a),\phi^T(b))>7e$.  We notice that  $d(a,b)\leq d(a,p)+d(p,b)\leq \eps$.

Let $\pi(a)$ and $\pi(b)$ be the períods of $ a$ and $b$, respectively.  

For each $s\in\Sigma_2=\{0,1\}^{\Z}$ we define the sequence $\{(x_i,t_i)\}_{i\in\Z}$ putting $(x_i,t_i)=(a,\pi(a))$ if $s_i=0$ and $(x_i,t_i)=(b,\pi(b))$ if $s_i=1$.

It is easy to see that each $A_s$ is an $\de_1$-$T_1$-pseudo orbit  and therefore there exists a point $y_s$ which $\eps$-shadows $A_s$. Moreover, each shadow is unique by expansivity. Let us define $W=\cup y_s$ with ${s\in\Sigma_2}$ and $Y=\cup_{t\in\R}\phi^t(W)$

\vspace{0.1in}
\textit{Claim:} $Y$ is a compact.
\vspace{0.1in}

To show that $Y$ is closed suppose that $y_n\to y$ with $y_n\in Y$. For each $y_n$ there exists a sequence $s_n\in \Sigma_2$  such that $y_n$ $\eps$-shadows $A_{s_n}$.  To prove our assertion, we have to obtain $s\in \Sigma_2$ and  $h\in Rep_{\eps}(\phi)$ such that $y$ $\eps$-shadows $A_s$. Since $\Sigma_2$ is compact, we can assume that $s_n\to s$. Moreover,  each reparametrization of $y_n$ belongs to $Rep_{\eps}(\phi)$, then $\{h_n\}$ is an equicontinuous sequence. Thus we can assume that $h_n\to h\in Rep_{\eps}(\phi)$.

Fix $T>0$ and let $\rho>0$ such that $d(x,y)<\rho$ implies $d(\phi^t(x),\phi^t(y))<\eps$. Let us  take $y_{n}\in B_{\rho}(x)$. Thus  the chioce of $\rho$ implies $d(\phi^{h(t)}(y),\phi^{h_n(t)}(y_n))<\eps$, if $n$ is large enough. This inequality combined with the fact that the sequences  $s_n$ have the first entries equal to the first entries of $s$ if $n$ is large, gives us that $y$ $\eps$-shadows $A_s$ until time $T$. Now, a straigthforward limit calculation proves that $y$ $\eps$-strongly-shadows $A_s$ and therefore $Y$ is closed. 

\vspace{0.1in}

Since $Y$ is closed we can take new families of cross sections $\mathcal{T}\{T_1',...,T_{k'}'\}$ and $\mathcal{S}=\{S_1',...,S_{k'}'\}$ where $T_i'=T_i\cap Y$ and $S_i'=S_i\cap Y$. Notice that if we define ${T}_i^*{'}=T_i^*\cap Y$  then these families satisfies the properties $(1),(2)$ and $(3)$ of the original families. 

Consider the point $a$ and let $t_0^a$ be the smallest  $t\geq0$ such that $\phi^t(a)\in {S'}^+$ and consider the pair $(S_0^a,t_0^a)$ such that $\phi^{t_0^a}(a)\in S_0^a$ with $S_0^a=\{S_1',...,S_{k'}'\}$. Then define in the same manner the pair $(S_i^a,t_i^a)$ where  is the $i$-th smallest positive time such that $\phi^t(a)\in {S'}^+$. For negative time one can define  these pairs in an analogous way, but using the greatest negative times.

Notice that $\{(S_i^a,t_i^a)\}_{i\in \Z}$ codify the the order and the times in which $a$ intersect the cross sections in the family $\mathcal{S}=\{S_1',...,S_{k'}'\}$. 
Analogously, we can obtain a similar sequence $\{(S_i^b,t_i^b)\}_{i\in \Z}$ for $b$. Since the orbits $a$ and $b$ are periodic,  previous sequences are also periodic. To be more specific, for $z=a,b$ there exists $k_z$ such that $S_{i+k_z}^z=S_i^z$ and $t_{i+k_z}^z=t_i^z+t_{imod(k_z)}^z$. 

Now, since the orbit of any point $y\in Y$ $\eps$-shadows the pseudo-orbits  obtained concatenating the orbits of $a$ and $b$, then by the choice of $\eps$, the analogous sequence $\{(S_i^y,t_i^y)\}$ defined for $y$ is obtained concatenating  the sequences $(S_0^a,...,S_{k_a}^a)$ and $(S_0^b,...,S_{k_b}^b)$ in the first entry, and with times close to the times of crossing for $a$ and $b$. 

In \cite{BW}, Bowen and Walters showed that we can construct a suspension of a subshift of $\Sigma_{\mathcal{T}}$  for which $\psi=\phi|_Y$ is a factor. 
To conclude our proof we briefly will describe the construction used in their proof and discuss the main difference with our case. (For more precise details see \cite{bowen})

The family $\mathcal{S}$ codify the path described by  orbits of $\psi$ in the following way: Since any point $y\in Y$ needs to cross some $S_0^y\in\mathcal{S}$ at most in time $\xi$, we construct for $y$ the sequence $\{(S_i^y,t_i^y)\}$ as above.  
The desired subshift is defined  as the set of $\Sigma_{\psi}$ of the sequences $s$ of $\Sigma_{\mathcal{S}}$ for which there exists a point $y_s$ which crosses the cross sections of $\mathcal{S}$ in the order defined by $s$ and the roof function $r$ of the suspension is given at each point as the time spent to cross the cross sections.
Then the factor map identify orbits points of $\Sigma_{\psi}^r$ orbit of with of points in $Y$ in the obvious way. In their case, there is not a reason for two different orbits of  $\Sigma_{\psi}^r$ be related to different orbits of $Y$.
The same does not occur in our case. Indeed, by the choice of $\eps$ the sequences  $\{(S_i^y,t_i^y)\}$ are in a one-to-one correspondence with the orbits of $\psi$.  So the factor map defined in \cite{BW} is a homeomorphism in our case.

\section{Pointwise Expansivity}

Now we begin to investigate pointwise aspects of expansive flows. The first definition of  pointwise expansivity is due to Reddy in \cite{RD} in the setting of homeomorphisms. He required for any point $x$ in the base space the existence of a positive number $c(x)$ such that the dynamic ball centered at $x$ and with radius $c(x)$  contains only $x$.

Then we can try to adapt this definition to the time continuous case.

\begin{definition}
	A point $x$ is called an expansive point of $\phi$ if for every $\eps>0$ there exists $c(x)>0$ such that if there exists $y$ and a reparametrization $h$ such that $d(\phi^t(x),\phi^{h(t)}(y))<c(x)$ for every $t\in \R$,
	then $ y=\phi^t(z)$ with $z\in O(x)$ and $|t|\leq \eps$. 
	 A flow $\phi$ is pointwise expansive if every point of $x$ is expansive.
\end{definition}
 
 Notice that if $\phi$ is pointwise expansive and  given $\eps>0$, there exist $c$ such that $c(x)\geq c$ for every $x$, then $\phi$ is expansive.
 
 As in the homeomorphism case, $Sing(\phi)$ is finite if $\phi$ is pointwise expansive. Indeed, if there are infinitely many exapansive points then they must accumulate in some point $x$. Then $c(x)$ must be $0$ and  $\phi$ cannot be pointwise expansive.
 
 Another difficult about this definition is to relate pointwise expansivity with expansivity. As in the homeomorphism case, pointwise expansiveness does not implies expansiveness. The following example shows this fact.
 
 \begin{example}
 	In \cite{BW} B. Carvalho and W. Cordeiro give an example of $2$-expansive homeomorphism $f$ with the shadowing property which is not expansive. In their example all the points are expansive points. If one consider a suspension flow of $f$, one will obtain an example of non-expansive flow whose points are expansive.
 \end{example}

 In order to define a pointwise kind of expansivity which allows us to recover  expansivity from the pointwise one, we proceed in the same manner as in \cite{noix} and define uniform-expansive points.

 \begin{definition}
 	We say that $x$ is an uniformly-expansive point of $\phi$ if there is a neighborhood $U$ of $x$ with the following property: For every $\eps>0$ there exists $\de>0$ such that if $y,z\in U$ and there exists a reparametrization $h$ satisfying $d(\phi^t(y),\phi^{h(t)}(z))<\de$ for every $t\in \R$ then $y=\phi^t(w)$ with $w\in O(z)$ and $|t|<\eps$.	
 	
 \end{definition}

We denote $Exp(\phi)$ by the set of uniformly-expansive points of $\phi$.

\begin{proposition}
	$Exp(\phi)$ is an invariant set.
\end{proposition}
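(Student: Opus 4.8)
The plan is to prove the seemingly stronger pointwise statement: if $x\in Exp(\phi)$ then $\phi^s(x)\in Exp(\phi)$ for every $s\in\R$. Applying this to both $s$ and $-s$ gives $\phi^s(Exp(\phi))=Exp(\phi)$, which is the asserted invariance. So fix $x\in Exp(\phi)$, let $U$ be a neighborhood of $x$ realizing the uniform-expansivity of $x$, and fix $s\in\R$; the task is to produce a neighborhood of $\phi^s(x)$ with the corresponding property. The obvious guess $V=\phi^s(U)$ does \emph{not} work directly: when one transports an infinite shadowing relation $d(\phi^t(y'),\phi^{h'(t)}(z'))<\de'$ based at points of $\phi^s(U)$ back to one based at points of $U$ via the substitution $\tau=t+s$, the reparametrization $h'$ is replaced by $\tau\mapsto h'(\tau-s)+s$, which is an increasing homeomorphism of $\R$ but in general fails the normalization $h(0)=0$ required of a reparametrization. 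Correcting this normalization forces one to slide the reference point along its orbit, and then one needs to know that the slid point is still in $U$. For this reason I would fix $r>0$ with $B_{2r}(x)\subseteq U$ and take $V=\phi^s(B_r(x))$, a neighborhood of $\phi^s(x)$.

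The argument then runs as follows. Given $\eps>0$, let $\de_0=\de_0(\eps)>0$ be the constant furnished by uniform-expansivity of $x$ relative to $U$, and set $\de'=\min\{\de_0,r\}$. Suppose $y',z'\in V$ and $h'\in Rep(\phi)$ satisfy $d(\phi^t(y'),\phi^{h'(t)}(z'))<\de'$ for all $t\in\R$, and write $y=\phi^{-s}(y')$, $z=\phi^{-s}(z')$, so $y,z\in B_r(x)$. Using $\phi^t\circ\phi^s=\phi^{t+s}$ and substituting $\tau=t+s$ turns the hypothesis into $d(\phi^\tau(y),\phi^{h'(\tau-s)+s}(z))<\de'$ for all $\tau\in\R$. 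Now put $c=h'(-s)+s$, $\tilde z=\phi^c(z)$ and $\tilde h(\tau)=h'(\tau-s)-h'(-s)$; then $\tilde h\in Rep(\phi)$, $\phi^{h'(\tau-s)+s}(z)=\phi^{\tilde h(\tau)}(\tilde z)$, and evaluating the inequality at $\tau=0$ gives $d(y,\tilde z)<\de'\leq r$, whence $\tilde z\in B_{2r}(x)\subseteq U$, while $y\in U$ as well. Therefore uniform-expansivity of $x$ applies to the pair $y,\tilde z\in U$ together with $\tilde h$ and $\de'\leq\de_0$, yielding $y=\phi^\tau(w)$ with $w\in O(\tilde z)$ and $|\tau|<\eps$. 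Since $\tilde z$ and $z$ lie on one orbit and $z$ and $z'$ lie on one orbit, we have $O(\tilde z)=O(z')$, so $w\in O(z')$; applying $\phi^s$ to $y=\phi^\tau(w)$ and using invariance of orbits gives $y'=\phi^\tau(w')$ with $w'=\phi^s(w)\in O(z')$ and $|\tau|<\eps$. This is exactly the defining property of a uniformly-expansive point for $\phi^s(x)$ with the neighborhood $V$, so $\phi^s(x)\in Exp(\phi)$.

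The only delicate point is the one isolated above: the time shift that carries a shadowing configuration near $\phi^s(x)$ to one near $x$ does not preserve the normalization $h(0)=0$, so one is compelled to replace $z$ by the point $\phi^{c}(z)$ on its orbit, and the preliminary shrinking of the neighborhood (taking $V=\phi^s(B_r(x))$ with $B_{2r}(x)\subseteq U$ and $\de'\le r$) is precisely what keeps this new point inside the witnessing neighborhood $U$. All remaining manipulations are routine applications of the flow law $\phi^{t}\circ\phi^{s}=\phi^{t+s}$ and of the fact that two points of a common orbit have the same orbit, so $Exp(\phi)$ is invariant.
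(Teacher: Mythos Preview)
Your proof is correct and follows the same strategy as the paper's: transport the expansivity neighborhood of $x$ forward by $\phi^s$ and pull the shadowing estimate back to $U$ via the flow law. The paper simply takes $\phi^s(U)$ as the new neighborhood and asserts that ``the previous estimate is still valid'' for the pulled-back points, without addressing the fact that the induced map $\tau\mapsto h(\tau-s)+s$ need not satisfy $h(0)=0$; you spotted this and repaired it by shrinking to $V=\phi^s(B_r(x))$ with $B_{2r}(x)\subseteq U$, renormalizing the reparametrization, and sliding $z$ to $\tilde z=\phi^c(z)$, then using $\de'\le r$ and the triangle inequality at $\tau=0$ to keep $\tilde z$ inside $U$. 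So the two arguments are the same in outline, but yours is strictly more careful on the one technical point where the paper's write-up is loose.
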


\begin{proof}
	Consider $x\in Exp(\phi)$ and let $U$ be its expansivity neighborhood. Fix $\eps>0$ and let $\de$ be given by the expansivity. If $t\in \R$, then $\phi^t(U)$ is a neighborhood of $\phi^t(x)$. Suppose  there are $z,y\in \phi^t(U)$ such that $d(\phi^t(z),\phi^{h(t)}(y))<\de$ for every $t\in \R$. Then $\phi^{-t}(z),\phi^{-t}(y)\in U$ and the previous estimate is still valid. Thus $y=\phi^s(z')$ with $z\in O(z)$ and $|s|<\eps$. Thus $\phi^t(x)$ is an uniformly-expansive point of $\phi$ with expansivity neighborhood $\phi^t(U)$.  
\end{proof}

Now  we use uniformly-expansive points to obtain the expansivity of $\phi$.

\begin{proposition}  $\phi$ is Bowen-expansive if, and only if , every point is uniformly-expansive. 
\end{proposition}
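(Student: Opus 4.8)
The plan is to prove the two implications separately. The forward direction is immediate: if $\phi$ is Bowen-expansive with expansivity constant $\de$ associated to a given $\eps>0$, then for any point $x$ we may take $U=X$ as its expansivity neighborhood, and the same $\de$ works verbatim. Thus every point is uniformly-expansive (in fact with a uniform constant), and this is where I would spend the least effort.

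For the converse, suppose every point is uniformly-expansive; I want to produce, for each $\eps>0$, a single $\de>0$ valid globally. First I would invoke compactness of $X$: cover $X$ by the expansivity neighborhoods $\{U_x\}_{x\in X}$ and extract a finite subcover $U_{x_1},\dots,U_{x_N}$, with associated constants $\de_1,\dots,\de_N$ corresponding to the chosen $\eps$. Let $\lambda>0$ be a Lebesgue number for this finite cover, so that every $\lambda$-ball in $X$ is contained in some $U_{x_j}$. Now set $\de=\min\{\lambda,\de_1,\dots,\de_N\}$ — or, to be safe, a slightly smaller quantity; I'll refine below.

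The key step is then: given $x,y\in X$ and a reparametrization $h$ with $d(\phi^t(x),\phi^{h(t)}(y))<\de$ for all $t\in\R$, I need to conclude $y=\phi^s(z)$ with $z\in O(x)$ and $|s|<\eps$. Evaluating at $t=0$ gives $d(x,y)<\de\le\lambda$, so both $x$ and $y$ lie in a common $U_{x_j}$; applying the uniform-expansivity property on $U_{x_j}$ with constant $\de\le\de_j$ yields the desired conclusion. So the argument closes once we know $x,y$ land in a common neighborhood, which the Lebesgue number arranges.

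The main obstacle — and the point requiring care — is the interaction with reparametrizations: the uniform-expansivity definition as stated only controls pairs of orbits that stay $\de$-close under \emph{some} reparametrization, and it is not a priori obvious that a tracking $h$ defined on all of $\R$ is the same kind of object used in the local definition. Since both definitions quantify over $h\in Rep(\phi)$ without restricting the reparametrization class, this matches up directly; but one should double-check that no extra smallness of $\de$ is needed to ensure that the witness $w\in O(z)$ produced locally is genuinely in $O(x)$ rather than merely nearby (it is, since $O(x)=O(z)$ when $z\in O(x)$). A secondary subtlety is that the finite subcover argument only produces finitely many $\de_j$ for each fixed $\eps$; since $\eps$ is fixed before the covering step, this causes no circularity. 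Once these points are dispatched the proof is complete.
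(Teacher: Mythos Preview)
Your proposal is correct and follows essentially the same approach as the paper's own proof: dismiss the forward direction as obvious, then for the converse use compactness to extract a finite subcover $U_{x_1},\dots,U_{x_N}$ of expansivity neighborhoods, set $\de=\min\{\de_{x_1},\dots,\de_{x_N},\eta\}$ with $\eta$ the Lebesgue number, and conclude by observing that $d(x,y)<\de$ forces $x,y$ into a common $U_{x_j}$. The additional remarks you make about reparametrizations and the witness $w\in O(z)$ are harmless elaborations rather than a different strategy.
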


\begin{proof}
	Obviously, we just need to prove the converse.
	Suppose that every point of $X$ is uniformly-expansive and fix $\eps>0$. Since $X$ is compact, we can cover $X$ with a finite number of open sets $U_{x_1},...,U_{x_n}$ given by the expansiveness of the points $x_1,...,x_n$. Set $\de=\min\{\de_{x_1},...,\de_{x_n},\eta\}$, where $\de(x_i)$ is the expansivity consant of $x_i$ and $\eta$ is the Lebesgue number of the cover. If there are points $x,y\in X$ and a reparametrization $h$ such that $d(\phi^t(x),\phi^{\rho(t)}(y))<\de$ for every $t$. Then $x,y\in U_{x_i}$ for some $i$ and therefore $y=\phi^t(z)$ with $z\in O(x)$ and $|t|<\eps$. Then we conclude that $\phi$ is expansive.
\end{proof}

\section{Proof of Theorem B}

Now we are able to prove theorem B. We shall divide the proof in two steps.
First we construct a  $\phi$-invariant set $Y\subset X$  in a very similar way as in theorem A, but we need to pay attention with the initial points of the pseudo-orbits. So we use this set to estimate the entropy of $\phi$.

\vspace{0.1in}
\textbf{Step 1: Constructing $Y$}
\vspace{0.1in} 

Let $p$ be under the hypothesis of the theorem and let $U$ be its expansivity neighborhood. Fix $0<\ka<1$ and let $8e>0$ be the expansivity constant of $p$ such that $B_{8e}(p)\subset U$.

 Let $0<2\eps<e$ be given by the $e$-shadowing though $p$. In \cite{jesus} the authors proved that any point in $B_{\eps}(p)$ is  $2e$-shadowable.
 We will use the shadowing property twice in this proof, so let $0<2\de<\eps$ be such that every $\de$-pseudo orbit trough $p$ is $\eps$-shadowable.
 Let $0<2\eta<\de$ be such that if $d(x,y)<\eta$ then $d(\phi^t(x),\phi^t(y))<\de$ for any $x,y\in X$ and $t\in [-1,1]$. 
 
 Since $p$ is a non-wandering point, there are $x_a\in B_{\eta}(p)$ and $t_a>1$ such that $\phi^{t_0}(x_a)\in B_{\de}(p)$. 
Next, we define the following set: 
\begin{equation*}
	A=\{...,(p,1),(\phi^{1}(x_a),t_a-1),(p,1),(\phi^1(x_a),t_a-1),...\} .
\end{equation*}

We claim that $A$ is a $\de$-$1$-pseudo-orbit through $x$. Indeed, by the choice of $\eta$ we have $d(\phi^1(p),\phi^1(x_a))<\de$ and $d(\phi^{t_a}(x_a),p)<\eta<\de$

The shadowing property through $p$ implies the existence of a point $a\in B_{\eps}(b)$ which $\eps$-shadows $A$. More precisely, there exists a reparametrization $h\in Rep(\phi)$  such that:
\begin{itemize}
	\item  If $t\in[it_0,it_0+1]$, then $d(\phi^{h(t)}(a),\phi^{t'}(p))\leq \de$ with $t'=t-it_0$.  
	\item If $t\in [it_0+1,(i+1)t_0]$, then $d(\phi^{h(t)}(a),\phi^{t'}(x_a)\leq \de$ with $t'=t-(i+1)t_0$.
\end{itemize}

 Notice that $d(\phi^{h(t)}(a),\phi^{h(t+t_a)}(a))<2\eps<e$ for $t\in \R$.  Thus  expansitivty implies that $a$ is a periodic point and it must be different from $p$.

Now, let $T$ be such that $d(\phi^T(a),\phi^T(p))>8e$. Set $\eps'=d(p,O(a))$ and let $0<2\de'<\eps$ be given by the $\eps'$-shadowing through $p$. Fix $0<2\eta'<\de'$ be such if $d(x,y)<\eta'$ then $d(\phi^t(x),\phi^t(y))<\de'$ for every $x,y\in X$ and every $t\in[-1,1]$.

Let $0<\eta'<\eta$ be such that $d(\phi^t(x),\phi^t(y))<\de$ if $d(y,z)<\eta'$ and $t\in [-T,T]$.

Since $x$ is a non-wandering point, we can choose $x_b\in B_{\eta'}(p)$  such that there exists $t_b>T$ satisfying $\phi^{t_b}\in B_{\eta'}(p)$.

 If we repeat the steps to construct $a$, we can find a periodic point $b\in B_{\eps}(x)$ different form $x$ and $a$  satisfying $d(\phi^T(a),\phi^T(b))>7e$.  We notice that  $d(a,b)\leq d(a,p)+d(p,b)\leq \eps$.

Let $\pi(a)$ and $\pi(b)$ be the períods of $ a$ and $b$, respectively. 

 For each $s\in\Sigma_2=\{0,1\}^{\Z}$ we define the sequence $\{(x_i,t_i)\}_{i\in\Z}$ putting $(x_i,t_i)=(a,\pi(a))$ if $s_i=0$ and $(x_i,t_i)=(b,\pi(b))$ if $s_i-1$.

It is easy to see that each $A_s$ is an $\eps$-$1$-pseudo orbit trough $a$ or $b$ and therefore there exists a point $y_s$ which $2e$-shadows $A_s$. Moreover, each shadow is unique by expansivity. Let us define $W=\cup_{s\in\Sigma_2}y_s$ and $Y=\cup_{t\in\R}\phi^t(W)$

It is easy to see $Y$ is an invariant subset and $\phi|_{Y}$ is expansive.

\vspace{0.1in}
\vspace{0.1in}
\vspace{0.1in}
\vspace{0.1in}
\textbf{Step 2: Estimating the Entropy of $\phi$}
\vspace{0.1in}

We begin considering the set $\overline{Y}$ which is a compact $\phi$-invariant set. Hence by Theorem \ref{cross} we can obtain a finite family of cross sections $\mathcal{T}$ of diameter at most $e$ which ``generates" $\overline{Y}$ and $\be>0$ as in the remark of the Theorem \ref{cross}.       
 
 Let us make some considerations on the periodic pseudo-orbits $A_s$. By a periodic pseudo-orbits of period $i$, we mean the pseudo-orbits $A_s$ for which there exists $i\geq0$ such that $s_{n+i}=s_n$ for every $n\in \Z$.     
Notice that the same argument used to prove that $a$ and $b$ are periodic points in the construction of $Y$ can be used to prove that each $y_s$, $e$-shadow of $A_s$ is periodic, if $A_s$ is periodic. For each $n$ denote $B_n=\{s_1^n,...,s_{2^n}^n\}$ the set of $2^n$ distinct periodic sequences of period $n$ in $\Sigma_2$. We claim that its respective $y_{s_1^n},...,y_{s_{2^n}}^n$ shadows are also distinct. Indeed, let $s,s'\in B_n$ be two distinct sequences. Consider $i_0$ the minimal $i\geq0$ such that $s_i\neq s'_i$. Let $(x_i)$ and $(x'_i)$ be the sequences in $\{O(a),O(b)\}^{\Z}$ corresponding to order in which $y_s$ and $y_{s'}$ shadows the orbits of $a$ and $b$. We have that $x_i= x'_i$ for $0\leq i\leq i_0$ and $x_{i_0}=x'_{i_0}$. Since there exists a point $a'\in O(a)$ and a point in $b'\in O(b)$ which are at least $7e$ apart, then $y_s\neq y_{s'}$. Otherwise $y_s$ must to $2e$-shadow simultaneously $O(a)$ and $O(b)$. Set $t_n=\max\{\pi(y_1^n),...,\pi(y_{2^n}^n)\}$.

\vspace{0.1in}
\textit{Claim: The sequence $t_n\to \infty$ as $n\to \infty$.}
\vspace{0.1in}
 
 Indeed, to prove the claim we consider in $B_n$ a sequence $y_s$ such the its corresponding sequence $s\in \Sigma_2$ satisfies $s_i\neq s_{i+1}$ for every $i\in \Z$. Then when $y_s$ is $2e$-shadowing $O(a)$ it needs to cross some cross section near to $a'$. The same occurs when $y_s$ shadows $O(b)$. Thus $y_s$ spends at least time $\be$ to stop shadowing $O(a)$ and begins to shadow $O(b)$. Then $\pi(y_s)\geq n\be$. Thus $t_n$ converges monotonically to infinity.

 Let us define a reparametrization $h$ setting $h(t_n)=n$, $h(-t_n)=-n$, $h(0)=0$ and mapping linearly the intervals $[t_n,t_{n+1}]$ in $[n,n+1]$. Consider the flow $\psi^t=\phi^{h(t)}$. 
For each $n$ the expansiveness of $\phi$ in $Y$ gives us that the set $B_n=\{y_1^n,...,y_{2^n}^n\}$ is $t_n$-$\al$-separated for if $\alpha$ is small enough. Thus $B_n$ is a $n$-$\al$-separated set of $\psi$.
Hence $$h(\psi)\geq\lim\limits_{\eps\to 0}\lim\limits_{n\to \infty}\frac{1}{n}\log \#B_n=\log2>0.$$ 

Finally, $\psi$ is a time change of $\phi$ and its entropy does not vanishes, then we can conclude the same for $\phi$ (see \cite{inv}).

\section{Proof of Theorem C}

    In order to prove Theorem  C we will proceed in a very similar way as in Theorem B. We begin constructing an invariant set.  
	Let $p$ be a non-critical, non-wandering, shadowable and uniformly-expansive point of $\phi$. Let $U$ be the expansiveness neighborhood of $p$. Since in previous theorem we did not use the fact of $\phi$ be non-singuar in the construction of $Y$ we can do exactly the same construction. So let $Y\subset X$ obtained exactly as in Theorem B. The only difference here is that we use the strong-shadowableness of $p$. Then every reparametrization related to the points $y_s$ are in $Rep_{2e}(\phi)$. We will see that this fact allows us to construct $Y$ ``away" from singularities. 

	\textit{Claim: $Y$ is a closed set.}
	\vspace{0.1in}
	
	To show that $Y$ is closed suppose that $y_n\to y$ with $y_n\in Y$. For each $y_n$ there exists a sequence $s_n\in \Sigma_2$  such that $y_n$ $2e$-shadows $A_{s_n}$.  To prove our assertion, we have to obtain $s\in \Sigma_2$ and  $h\in Rep_{2e}(\phi)$ such that $y$ $e$-shadows $A_s$. Since $\Sigma_2$ is compact, we can assume that $s_n\to s$. Moreover,  each reparametrization of $y_n$ belongs to $Rep_{2e}(\phi)$, then $\{h_n\}$ is an equicontinuous sequence. Thus we can assume that $h_n\to h\in Rep_{2e}(\phi)$.
	
	Fix $T>0$ and let $\rho>0$ such that $d(x,y)<\rho$ implies $d(\phi^t(x),\phi^t(y))<e$. Let us  take $y_{n}\in B_{\rho}(x)$. Thus  the chioce of $\rho$ implies $d(\phi^{h(t)}(y),\phi^{h_n(t)}(y_n))<e$, if $n$ is large enough. This inequality combined with the fact that the sequences  $s_n$ have the first entries equal to the first entries of $s$ if $n$ is large, gives us that $y$ $2e$-shadows $A_s$ until time $T$. Now, a straigthfoward limit calculation proves that $y$ $2e$-strongly-shadows $A_s$ and therefore $Y$ is closed. 
	
	\vspace{0.1in}
	
	First, notice that the previous construction implies that $Y\cap Sing(\phi|_Y)=\emptyset$. Hence $\psi=\phi|_Y$  is an expansive non-singular flow with the shadowing property. Thus theorem A gives us the conjugacy. Furthermore, for $\psi$ we are under the hypothesis of Theorem B. Then we know that $h(\phi)>0$.

 \section{Points with Shadowing-type Properties}

In this section we investigate if the possibility to extend the definition of shadowable points for other shadowing-type properties.  By a shadowing-type property we mean a dynamical property for which we can approximate by real orbits sequences of points with some kind of "orbit-like" behaviour.   
Shadowing property is clearly a shadowing-type property for which the orbit-like behaviour for the sequences is to be a pseudo-orbit. 
As we have seen in previous sections there is a well stablished definition for shadowable points, but is it possible to generalize this definiton for other kinds of shadowing-type properties?  
\vspace{0.1in}

\textit{Specification Property}

\vspace{0.1in}
We begin our this discussion with specification property, motivated by the work  of  P. Das , A.G. Khan and T. Das in \cite{indianos }.  They have defined specification points in the setting of continuous maps and have shown that the existence of such points gives rise to chaotic properties, in particular they obtained positiveness of the entropy.
 
 However, it seems that the definition given by \cite{indianos } is too strong. Since it implies the specification property in a global way as we will see below. First, we give the definition of the specification property.

Let $\{x_i\}_{i\geq 0}$ be a sequence of points in $X$. Let $\{t_i\}_{i\geq 0}$, $\{m_i\}_{i\geq 0}$ be sequences of positive integers. We  call the sequence $\mathcal{S}=\{(x_i,t_i)\}_{i\geq 0}$  an orbit sequence and the sequence  $\mathcal{G}=\{m_i\}_{i\geq 0}$  a time-gap. We say that the pair $(\mathcal{S},\mathcal{G})$ is $\eps$-shadowed by $z\in X$ if
 $$ d(f^{s_i+j}(z),f^j(x_i))<\eps, \textrm{ if } j=0,1,2....,t_i-1  $$ 

where $s_0=0$ and $s_i=\sum_{n=0}^{i-1} (t_n+m_n -1)$.

A continuous map $f:X\to X$ has the specification property if for every $\eps>0$, there is $N>0$ such that  any orbit sequence $\mathcal{S}$  with  time-gap $\mathcal{G}$  satisfying $\min\{m_i\}\geq N$ is $\eps$-traced by some point $z\in X$.

 Now, let us recall the definition of specification points as in definition 4.1 of \cite{indianos }. We say that a sequence $\{x_i\}_{i\geq0}$ of points in $X$ is through $x$ if $x_0=x$
 
 
 \begin{definition}
 	Let $f:X\to X$ be a be a continuous map. A point $x$ is a specification point of $f$ if for every $\eps>0$ there exists $K>0$ such that any orbit sequence $\mathcal{S}$ through $x$ with time-gap $\mathcal{G}$ satisfying $\min\{m_i\}\geq K$ is $\eps$-shadowed for some point $z\in X$.

 \end{definition}

 
 The following result connects these two notions.
 
 \begin{theorem}\label{spc}
 	$f$ has the specification property if, and only if, there is some specification point in $X$.
 	
  \end{theorem}

\begin{proof}
	Let $x$ be a specification point of $f$ and fix $\eps>0$.
	Let $K>0$ be given by the specification of $x$. Now chose orbit sequence $\mathcal{S}$ with time gap $\mathcal{G}$ satisfying $\min\{m_i\}\geq K$. If $x_0=x$, then $\mathcal{S}$ is $\eps$-shadowed by some point. If not, define a new orbit segment $\mathcal{S}'$ with time-gap $\mathcal{G}'$ as follows:

Set $\mathcal{S}'=(\{y_i\},\{t'_i\})$ satisfying $y_0=x$ , $y_i=x_{i-1}$ for $i\geq 1$, $t'_0=0$ and $t'_i=t_{i-1}$ for $i\geq 1$. Set  $\mathcal{G}'=\{m'_i\}$ satisfying $m'_0=k$ and $m'_i=m_{i-1}$ for $i\geq 1$. Therefore $\mathcal{S}'$ is an orbit sequence through $x$ with time-gap $\mathcal{G}'$ satisfying $\min\{m'_i\}\geq K$. By the specification property of $x$, there is some $z\in X$ $\eps$-shadowing $\mathcal{S}'$. Now notice that there exists a minimal  time $n\geq K$ such tha $f^n(z)$ is an $\eps$-shadow for $\mathcal{S}$ and this concludes the proof.

\end{proof}


As a corollary, we recast some results in \cite{indianos }.

\begin{corollary}
	If a continuous map $f$ has a specification point, then $f$ has positive entropy, is chaotic in the sense of Devaney and it is topologically mixing.
\end{corollary}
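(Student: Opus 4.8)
The plan is to deduce all three conclusions from Proposition~\ref{india} together with classical properties of systems with the specification property. We may and do assume that $X$ has more than one point; this is harmless, since for a one-point space the statement is empty or false, and in any case $X$ turns out to be infinite once positive entropy is established. So suppose $f$ has a specification point. By Proposition~\ref{india}, $f$ has the specification property on all of $X$, and it remains to read off the three conclusions.

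Topological mixing: given nonempty open sets $U,V$, choose $u\in U$, $v\in V$ and $\eps>0$ with $B_\eps(u)\subset U$ and $B_\eps(v)\subset V$, and let $K=K(\eps)$ be the gap given by the specification property. For each $n\ge K$ apply the specification property to the two one-point orbit segments $x_0=u$ on $[0,0]$ and $x_1=f^{-n}(v)$ on $[n,n]$ (the gap being $n\ge K$); the resulting point $y$ satisfies $y\in U$ and $f^n(y)\in V$, so $f^n(U)\cap V\neq\emptyset$ for all $n\ge K$. Positive entropy: fix $p\neq q$ in $X$ with $d(p,q)=3\eps>0$, let $K=K(\eps)$, and for each word $w\in\{0,1\}^n$ use the specification property, applied to the singleton segments $f^{-iK}(p)$ or $f^{-iK}(q)$ placed at time $iK$ according as $w_i=0$ or $w_i=1$, to produce a point $y_w$ whose $iK$-th iterate is $\eps$-close to $p$ when $w_i=0$ and $\eps$-close to $q$ when $w_i=1$, for $i=1,\dots,n$. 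Distinct words then yield $(nK,\eps)$-separated points, so $s_{nK}(X,\eps)\ge 2^n$ and $h(f)\ge(\log 2)/K>0$. Both facts are also classical consequences of the specification property.

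Chaos in the sense of Devaney: this requires topological transitivity, density of periodic points, and sensitive dependence. Transitivity is immediate from the topological mixing just obtained. Density of periodic points follows from the specification property, invoking (or reproving along the lines of \cite{indianos }) the fact that specification forces periodic orbits to be dense. Finally, since $h(f)>0$ the space $X$ is infinite, and on an infinite metric space transitivity together with dense periodic points already implies sensitive dependence, by the theorem of Banks, Brooks, Cairns, Davis and Stacey; hence $f$ is Devaney chaotic. Assembling the three items proves the corollary, recovering the corresponding assertions of \cite{indianos }. The one step needing care is the density of periodic points: the specification property as extracted from Proposition~\ref{india} does not by itself furnish \emph{periodic} shadowing points, so this is the least automatic point of the argument; the mixing and entropy statements, by contrast, are routine once Proposition~\ref{india} is available.
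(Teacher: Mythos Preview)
Your approach matches the paper's: the corollary is stated immediately after Proposition~\ref{india} with no separate argument, the intended proof being precisely that Proposition~\ref{india} upgrades a single specification point to the global specification property, after which the three conclusions are the results already established in \cite{indianos } (and in the classical specification literature). Your caution about density of periodic points is well placed---non-periodic specification by itself does not automatically yield it---but the paper does not address this either; it simply defers all three conclusions to \cite{indianos }.
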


So, in order to try to exploit the idea of specification points for flows, we first need to deal with the following question.

 
 \vspace{0.1in}
 \textit{Question:} Is it possible to give a definition of specification point for a continuous map  which generates examples of systems for which the set of specification points is neither empty nor the whole space?    

\vspace{0.1in}

 Specification property allows big spatial gaps between orbit segments, this contrasts with shadowing property where the spatial gaps allowed on pseudo-orbits are controled by the shadowing constants. A careful  reading on the proof of Theorem \ref{spc} reveals that this is exactly the feature which turns specification a global definition. With this spirit we can wonder if the same is true for other shadowing-type properties allowing gaps.

Next we give definitions of various shadowing-type properties and inverstigate its pointwise versions. In what follows $f:X\to X$ denotes a continuous map of a compact metric space $X$.

\vspace{0.1in}

\textit{Gluing Orbit Tracing Property}

\vspace{0.1in}

A continuous map $f$ has the gluing orbit tracing property if for every $\eps>0$, there is $N>0$ such that  any orbit sequence $\mathcal{S}$  with  time-gap $\mathcal{G}$  satisfying $\max_{x_i\in \mathcal{G}} \{x_i\}\leq N$ is $\eps$-traced by some point $z\in X$ (see \cite{Sun}).

\vspace{0.1in}

\textit{Average Shadowing Property}

\vspace{0.1in}
 
A sequence $\{x_i\}$ of points in $X$ is an $\de$-average pseudo orbit if there are $N_{\de}>0$ such that $$ \frac{1}{n}\sum_{i=0}^{n-1}d(x_{i+k+1},f(x_{i+k}))<\de  $$ for every $k\geq0$ and $n\geq N_{\de}$.

We say that a $\de$-average pseudo orbit is $\eps$-average shadowed by $z\in X$ if $$ \limsup_{n\to \infty} \frac{1}{n} \sum_{i=0}^{n-1} d(f^i(x),x_i)<\eps. $$

We say that $f$ has the average shadowing property if for every $\eps>0$, there is $\de>0$ such that any $\de$-average pseudo orbit $\{x_i\}$ is $\eps$-average shadowed by some point $z\in X$ (See \cite{Bl}).

\vspace{0.1in}

\textit{Asymptotic Average Shadowing Property}

\vspace{0.1in}

A sequence  of points $\{x_i\}$ in $X$ is an asymptotic average pseudo orbit if  $$\lim_{n\to\infty} \frac{1}{n}\sum_{i=0}^{n-1}d(f(x_i),x_{i+1})\to 0.  $$

An asymptotic average pseudo orbit $\{x_i\}$  is asymptotically shadowed by $z\in X$ if $$\lim_{n\to\infty}\frac{1}{n} \sum_{i=0}^{n-1} d(x_i,f^i(z))\to 0.$$   

We say that $f$ has the asymptotic average shadowing property if every asymptotic pseudo orbit $\{x_i\}$ is asymptotically shadowed by some point $z\in X$ (See \cite{Gu}).

\vspace{0.1in}

\textit{Ergodic Shadowing Property}

\vspace{0.1in}

Let $\mathcal{S}=\{x_i\}$ be a sequence of points in $X$. Now write $P(\mathcal{S},\de)=\{i\geq 0; d(f(x_i),x_{i+1})\geq \de\}$
and $P_n(\mathcal{S},\de)=\{0\leq i \leq n; d(f(x_i),x_{i+1})\geq \de\}$. We say that $\mathcal{S}$ is an $\de$-ergodic pseudo orbit i $$\lim_{n\to \infty}\frac{\#E_n(\mathcal{S},\de)}{n}\to 0.$$

Now let $z\in X$ and write $S(\mathcal{S},z,\eps)=\{i\geq 0; d(f^i(z),x_{i})\geq \de\}$ and $S_n(\mathcal{S},z,\de)=\{0\leq i\geq n; d(f^i(z),x_{i})\geq \de\}$. We say that a point $z$ $\eps$-ergodic shadows $\mathcal{S}$ if $$\lim_{n\to \infty} \frac{\#S_n(\mathcal{S},z,\eps}{n}\to 0.$$

Finally, we say that $f$ has the ergodic shadowing property if for every $\eps>0$ there is $\de>0$ such that every $\de$-ergodic pseudo orbit $\mathcal{S}$ is $\eps$-ergodic shadowed for some point in $z\in X$ (See \cite{Fak}).

Now we define the pointwise versions of previous definitions.

\begin{definition}
Let $f$ be a continuouos map.
\begin{itemize}
\item $x$ is a point with gluing orbit tracing property if for every $\eps>0$ there is some $K>0$ such that any orbit sequence through $x$ with time gap $\mathcal{G}$ satisfying $\max\{m_i\}\leq K$ is $\eps$-shadowed by some point on $X$.
\item $x$ is an average shadowable point if for every $\eps>0$ there is some $\de>0$ such that any $\de$-average pseudo orbit is $\eps$-average shadowed for some point in $X$.

\item $x$ is an asymptotic average shadowable point if every asymptotic pseudo through $x$ is asymptotic average shadowed by some point in $X$
\item $x$ is an ergodic shadowable point if for every $\eps>0$ there is some $\de>0$ such that every $\de$-ergodic pseudo orbit through $x$ is $\eps$-ergodic shadowed by some point in $X$.

\end{itemize}

\end{definition}

Then we have the following result:

\begin{theorem}
Let $f:X\to X$ be continuouos map.

\begin{enumerate}

\item $f$ has the gluing orbit tracing property if, and only if, there is some gluing orbit tracing point on $X$.
\item $f$ has the average shadowing property if, and only, if there is some average shadowable point in $X$.
\item $f$ has the asymptotic shadowing property if, and only if, there is some asymptotic average shadowable poin in $X$
\item $f$ has the ergodic shadowing property if, and only if, there is some ergodic shadowable point in $X$. 

\end{enumerate}
\end{theorem}

\begin{proof} We just need to prove one direction for each item.

\begin{enumerate}
\item The proof is totally analogous to the prof of Theorem \ref{spc} and we shall omit it.
\item Let $x$ be an average shadowable point and fix some $\eps>0$. Let $\de>0$ be given by the average shadowing through $x$ and fix any $\frac{\de}{2}$-average pseudo orbit $\{x_i\}$ on $X$. Now construct a a new sequence $\{y_i\}$ such that $y_0=x$ and $y_i=x_{i-1}$ for $i\geq 1$. Notice that $\{y_i\}$ is a sequence through $x$. To show that $\{y_i\}$ is a $\de$-average pseudo orbit through $x$ let us consider $N>N_{\de}$ such that $\frac{1}{N}d(f(x),x_0)<\frac{\de}{2}$, where $N_{\de}$ is giver by $\{x_i\}$. This $N$ can be taken since, $X$ is compact and then has finite diameter. 
Now, for $n\geq N$ and  $k\geq0$ we have 

$$\frac{1}{n} \sum_{i=0}^{n-1}d(f(y_{i+k},y_{i+k+1}))\leq\frac{d(f(x),x_0)}{n}+\frac{1}{n}\sum_{i=0}^{n-2}d(f(x_{k+i},x_{k+i+1})\leq \frac{\de}{2}+\frac{\de}{2} $$  

Therefore $\{y_i\}$ is $\eps$-average shadowed by some point $z$ and by construction $f(z)$ is clearly an $\eps$-average shadow for $\{x_i\}$.

\item Let $x$ be an asymptotic average shadowable point for $f$ and let$\{x_i\}$ be any asymptotic average pseudo orbit for $f$. 
Construct a new sequence $\{y_i\}$ such that $y_0=x$ and $y_i=x_{i-1}$ for $i\ge 1$.  Then we have 
$$\lim_{n\to\infty} \sum_{i=0}^{n-1} d(f(y_i),y_{i+1})=\lim_{n\to \infty} \frac{d(f(x),x_0)}+\lim_{n\to\infty} \frac{1}{n} \sum {i=0}^{n-1}d(f(x_i),x_{i+1}))=0$$
 Then $\{y_i\}$ is an asymptotic average pseudo orbit trough $x$ and it is asymptotic average shadowed for some $z\in X$.
Now it is clear that $f(z)$ asymptotic average shadows $\{x_i\}$.

\item Let $x$ be an ergodic shadowable point for $f$ and let$\{x_i\}$ be $\de$-ergodic  pseudo orbit for $f$. 
Construct a new sequence $\{y_i\}$ such that $y_0=x$ and $y_i=x_{i-1}$ for $i\ge 1$.  We claim that $\{y_i\}$ is a $\de$-ergodic pseudo orbit through $x$. Indeed, if $d(f(x),x_0)<\de$ there is nothing to do. If not, we just need to observe that 
$$\lim_{n\to \infty}\frac{\# P_n(\{y_i\},\de)}{n}=\lim_{n\to \infty}\frac{\# P_n(\{x_i\},\de)+1}{n}=0. $$ 

 Then $\{y_i\}$ is $\eps$-ergodic shadowed for some $z\in X$ and it is clear that $f(z)$ $\eps$-ergodic shadows $\{x_i\}$.

\end{enumerate}

\end{proof}
 
\emph{Remark: All the results on this section are easily adapted to the context of homeomorphisms and continuouos flows}

As we have seen,  if a shadowing-type property allows us to have big spatial gaps in the sequences $\{x_i\}$, then we can recover the original property by its pointwise version if we define it analogously to shadowable points. Thus we cannot generate examples with the pointwise property, but without the global property.  This brings us to the following problem: 

 \textit{Question:} Is it possible to give a definition of point with some shadowing type property for a continuous map  which generates examples of systems for which the set of such points point is neither empty nor the whole space?

\end{document}